\newtheorem{theorem}{Theorem}
\newtheorem{lemma}{Lemma}
\newcommand{\Z}{\mathbb{Z}}
\newcommand{\R}{\mathbb{R}}
\newcommand{\nI}{\noindent{}}
\newcommand{\lp}{\left(}
\newcommand{\rp}{\right)}
\begin{document}

\title[An Extremal Property of the Square Lattice]{An Extremal Property of the Square Lattice}

\keywords{Square lattice, unimodular lattices, sphere packing}

\author[P. Helms]{Paige Helms}
\address{University of Washington, Seattle, US. 98195}
\email{phelms@uw.edu}

\thanks{This work could not have been completed without the kind support of Jayadev Athreya and Stefan Steinerberger. This author would also like to thank Aisha Mechery and Albert Artiles for their thoughtful feedback. }

\begin{abstract}

\nI{Motivated} by a 2019 result of Faulhuber-Steinerberger \cite{extremal}, we demonstrate that the real square lattice $\Z^2$ exhibits the same local, extremal property as the hexagonal lattice $\Lambda$, where distances of lattice points from the `deep holes' of natural fundamental domains increase under perturbation. 
If $\Delta$ is a perturbation of the lattice $\Z^2$ with respect to the Euclidean metric, then for a fixed deep hole $p$, the summed total distance of lattice points to $p$ strictly increases, and is bounded below by a function of the distance between the lattice and its perturbation. Additionally, we show this growth is approximately preserved by convex functions.

\end{abstract}
\maketitle


\section{Introduction}
\label{introduction}
In this section we define the space of real, unimodular lattices, give our result, and introduce notation. The novelty of this paper lies in exhibiting $\Z^2$'s extremal behavior with respect to the Euclidean distance function; that $\Z^2$ is a critical point with respect to this function in the space of lattices is known in the community, and we choose to include it here for completeness. 
\nI{Euclidean} lattices are ubiquitous in many fields of math, for example, group theory \cite{groupslattices}, cryptography \cite{LatBase}\cite{crypto}, representation theory \cite{reptheorylat}, and the study of Lie groups and Lie algebras \cite{LatLie}. A \emph{lattice} $\Gamma$ in $\R^n$ is a discrete, additive subgroup of finite covolume. Every lattice $\Gamma$ can be expressed as a set of integer linear combinations of a basis vectors $\{v_1, v_2, \ldots, v_n\}$ in $\R^n$. Symbolically we can express $\Gamma$ as $$\Gamma =\left \{ \sum_{i=1}^n m_i v_i: m_i \in \Z\right \}.$$ The convex hull of the $v_i$ is a \textit{fundamental domain} for $\Gamma$ acting on $\R^n$. 

Using the column vectors $v_i$ we form a matrix $g$; we use this to express $\Gamma$ as $\Gamma = g\Z^n$. Restricting to the case where $g \in SL \left(n, \R \right)$ is the same as considering lattices of unit covolume. The \emph{space of unimodular lattices}, denoted $L \left(\R^n \right)$, is then given by the quotient $SL \left(n, \R \right)/SL \left(n, \Z \right)$, where the coset $gSL \left(n ,\Z \right)$ is identified with the lattice $g \Z^n$. This assignment is well-defined, since for any $h \in SL \left(n, \Z \right)$, $h\Z^n = \Z^n$. 

Every flat two-dimensional torus can be seen as the quotient of $\R^2$ by a lattice $\Gamma = g\Z^2$, where we can think of the resulting torus $\R^2/\Gamma$ as a parallelogram spanned by (any choice of) basis vectors of $\Gamma$ with with sides identified by Euclidean translations. Any basis for $\Gamma$ gives a matrix in $GL\left(2, \R\right)$ whose columns are the basis vectors. When we normalize lattices to have covolume $1$, we can then restrict our set of matrices to $SL\left(2, \R\right)$. If we consider tori up to rotation, we have a further quotient by the group $SO(2,\R)$, so our space of 2-dimensional tori (up to rotation and scaling) is $SO\left(2,\R\right) \backslash SL(2, \R) / SL(2, \Z)$. 
Figure ~\ref{fig:modular:surface} gives an illustration of $L(\R^2)$ as a fundamental domain for the action of $SL(2, \Z)$ on the upper half-plane $\mathbb H^2 = SO(2, \R)\backslash SL(2,\R)$. 

\begin{figure}\caption{An illustration of $L(\R^2)$, unimodular lattices up to rotation, via an image of the fundamental domain for $SL(2, \Z)$ acting on $\mathbb{H} \simeq SO\left(2,\R\right) \backslash SL(2, \R)$. }\label{fig:modular:surface}
\begin{tikzpicture}[scale=1]
   \draw(-5,0)--(5,0);
   \draw(180:1) arc (180:120:1);
   \draw(60:1) arc (60:0:1);
   \filldraw[fill=blue!20!white]  (1/2, 5)--(60:1) arc (60:120:1)--(-1/2, 5);
   \draw[dashed] (0,1)--(0,5) node[below] at (0, 1){\textcolor{green}{\tiny{$\Z^2$}}};
   \draw[green!80] (0,1) circle (.1 cm);

%

   \draw[xshift=1cm](180:1) arc (180:120:1);
   \draw[orange!80] (.5, .866025) circle (.1cm) node[below] at (.5, .866025){\tiny{$\Lambda$}};
   \draw[xshift=1cm](60:1) arc (60:0:1);
   \draw[xshift=1cm] (1/2, 5)--(60:1) arc (60:120:1)--(-1/2, 5);

   \draw[xshift=2cm](180:1) arc (180:120:1);
   \draw[xshift=2cm](60:1) arc (60:0:1);
   \draw[xshift=2cm] (1/2, 5)--(60:1) arc (60:120:1)--(-1/2, 5);
   
      \draw[xshift=3cm](180:1) arc (180:120:1);
   \draw[xshift=3cm](60:1) arc (60:0:1);
   \draw[xshift=3cm] (1/2, 5)--(60:1) arc (60:120:1)--(-1/2, 5);
   
      \draw[xshift=-1cm](180:1) arc (180:120:1);
   \draw[xshift=-1cm](60:1) arc (60:0:1);
   \draw[xshift=-1cm] (1/2, 5)--(60:1) arc (60:120:1)--(-1/2, 5);

   \draw[xshift=-2cm](180:1) arc (180:120:1);
   \draw[xshift=-2cm](60:1) arc (60:0:1);
   \draw[xshift=-2cm] (1/2, 5)--(60:1) arc (60:120:1)--(-1/2, 5);
   
      \draw[xshift=-3cm](180:1) arc (180:120:1);
   \draw[xshift=-3cm](60:1) arc (60:0:1);
   \draw[xshift=-3cm] (1/2, 5)--(60:1) arc (60:120:1)--(-1/2, 5);
   
   \foreach \x in { -3, -2, -1, 0, 1, 2, 3, } \path(\x,0)node[below]{\tiny $\x$};

\end{tikzpicture}
\end{figure}
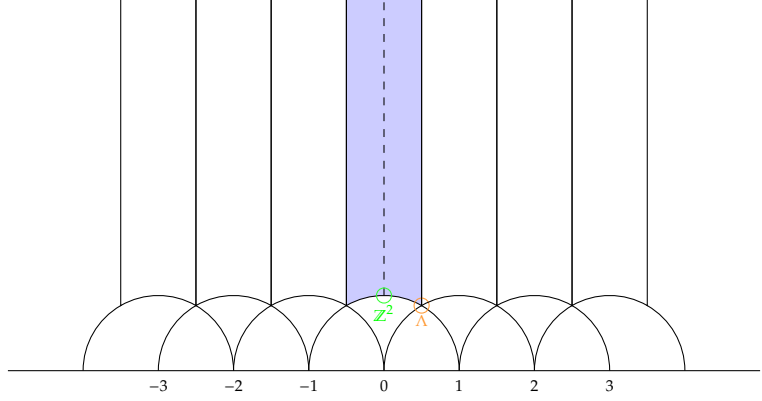

\subsection{Distances from deep holes}

We now fix our question: Let $p =\left(\frac{1}{2}, \frac{1}{2}\right)$ denote the center of the standard square fundamental domain $[0, 1]^2$ of $\Z^2$; in the terminology of \cite{extremal},  this center is called a \emph{deep hole} in the lattice. We note that deep holes are also referred to as \textit{circumcenters} when the lattices have rotational symmetry. Let $A_r\left(\Z, p\right)= \{mv+nw \in \Z^2 : |mv+nw-p| = r\}$ be the set of integer lattice points of distance $r$ from $p$. Let $\Delta = \left(v' w'\right)Z^2 = \Z v' + \Z w'$ represent a small perturbation of $\Z^2$ in the space of unimodular lattices. Then, $\det\left( v' w'\right)= 1$, and $|v-v'|$ and $|w-w'|$ are small. Next, we define $C_r\left(\Z^2, p\right)= \{mv' + nw' : |mv+nw - p| = r\}$ to be the set of perturbations of lattice points which were originally at distance $r$ from $p$ in $\Z^2$. Note that it is equivalent to express $C_r$ in terms of $A_r$: $C_r = \{mv' + nw': mv + nw \in A_r\}$. We want to compare the distances of the lattice points $C_r$ in the perturbed lattice $\Delta$ from the deep hole $p$ to the distances of the points $A_r$ in the original lattice. Symbolically, we want to compute the difference of the following sums to explore the behavior of lattices nearby $\Z^2$:
$$\sum_{\delta \in C_r} \|p-\delta\| \text{ and } \sum_{z \in A_r} \|p-z\|.$$

\subsection{Result}  

\begin{theorem}\label{theorem:theorem}
	\text{ } \\
	\indent{\indent{If}} $\Delta$ is sufficiently close to $\Z^2$ with respect to the Euclidean metric, then for a fixed deep hole $p$
	\begin{equation}\label{eq_dist2}
		\sum_{\delta \in C_r} \| p - \delta\| -  \sum_{z \in A_r} \| p - z\| \geq r \, |A_r| \, d\left(\Delta, \Z^2\right)^2.
	\end{equation} 
	\indent{\indent{If}} $\phi:\mathbb{R}_+ \rightarrow \mathbb{R}$ is any monotonically increasing, convex function, then
	\begin{equation*}\label{eq_dist3}
		\sum_{\delta \in C_r} \phi\left(\| p - \delta\|\right)-  \sum_{\lambda \in A_r} \phi\left(\| p - \lambda\|\right)\geq  r \, \phi'\left(r\right)\, |A_r| \, d\left(\Delta, \Z^2\right)^2.
	\end{equation*}
	

	\end{theorem}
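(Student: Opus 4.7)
The plan is to parameterize the perturbation as $\Delta = (I + M)\Z^2$ for a matrix $M$ of small norm satisfying the unimodular condition $\det(I + M) = 1$, equivalently $\mathrm{tr}(M) = -\det(M) = O(\|M\|^2)$. This yields a canonical bijection $A_r \to C_r$ via $z \mapsto \delta = (I+M)z$. Writing $\tilde z = z - p$, so that $\|\tilde z\| = r$, one has
\begin{equation*}
\|p - \delta\|^2 = \|\tilde z + Mz\|^2 = r^2 + 2\langle \tilde z, Mz\rangle + \|Mz\|^2,
\end{equation*}
and expanding the square root to second order gives
\begin{equation*}
\|p - \delta\| - r = \frac{\langle \tilde z, Mz\rangle}{r} + \frac{1}{2r}\|Mz\|^2 - \frac{1}{2r^3}\langle \tilde z, Mz\rangle^2 + O(\|M\|^3).
\end{equation*}

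The essential ingredient is the rotational symmetry of $A_r$ about $p$: the quarter-turn $z \mapsto p + R(z-p)$ preserves $\Z^2$ and fixes $p$, so it permutes $A_r$. This $\pi/2$-symmetry, combined with the $\pi$-symmetry $\tilde z \mapsto -\tilde z$, yields the moment identities
\begin{equation*}
\sum_{z \in A_r} \tilde z = 0, \qquad \sum_{z \in A_r} \tilde z\,\tilde z^{\,T} = \tfrac{|A_r|r^2}{2}\,I,
\end{equation*}
together with vanishing of all odd moments. Splitting $Mz = Mp + M\tilde z$ and applying these identities, the linear-in-$M$ contribution $\frac{1}{r}\sum_z \langle \tilde z, Mz\rangle$ collapses to $\tfrac{|A_r|r}{2}\mathrm{tr}(M)$, which is of order $\|M\|^2$ by unimodularity, and the mixed cubic cross-term $\sum_z \langle \tilde z, Mp\rangle\, \tilde z^T M \tilde z$ vanishes by the $\pi$-symmetry. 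Thus, to leading order, the first-order contribution cancels and one is left with a nonnegative quadratic form in $M$.

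The main obstacle is then an algebraic bookkeeping: using the same moment identities, the surviving quadratic pieces assemble as $\tfrac{|A_r|r}{4}\|M\|_F^2$ (from the $\|Mz\|^2$ sum) plus residual terms involving $\|Mp\|^2$, $(\mathrm{tr} M)^2$, and a quartic-in-$\tilde z$ remainder $\sum_z (\tilde z^T M\tilde z)^2$. One must show these residuals either vanish by the $\pi/2$-symmetry of $A_r$ or are dominated by the Frobenius piece, and then normalize $d(\Delta, \Z^2)^2$ so that it is precisely the squared Euclidean distance on the coset space of matrices that matches this dominant form. The net conclusion is the sharp inequality $\sum_{z\in A_r}(\|p-\delta\| - r) \geq r\,|A_r|\,d(\Delta,\Z^2)^2$.

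For the convex function statement, I would use the tangent-line inequality $\phi(x) \geq \phi(r) + \phi'(r)(x - r)$ valid for any convex $\phi$, applied at $x = \|p - \delta\|$ and summed over $\delta \in C_r$. Since $\|p - z\| = r$ for every $z \in A_r$, the left-hand side becomes $\sum_{\delta \in C_r}\phi(\|p-\delta\|) - \sum_{z \in A_r}\phi(\|p-z\|)$, while the right-hand side is $\phi'(r)\bigl(\sum_\delta \|p - \delta\| - r|A_r|\bigr)$. Monotonicity of $\phi$ gives $\phi'(r) \geq 0$, so multiplying the first part of the theorem through by $\phi'(r)$ yields the claimed bound $r\,\phi'(r)\,|A_r|\,d(\Delta,\Z^2)^2$.
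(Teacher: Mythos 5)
Your setup is genuinely different from the paper's: the paper restricts from the outset to the two-parameter slice $v_1=y^{-1/2}(1,0)$, $w_1=y^{-1/2}(x,y)$ (i.e.\ it works modulo rotations) and establishes criticality and positive-definiteness of the Hessian of the summed distance by brute-force differentiation in $(x,y)$, one quadruple of lattice points at a time; you work with a general perturbation matrix $M$ and replace the computation by the moment identities $\sum_{z\in A_r}\tilde z=0$ and $\sum_{z\in A_r}\tilde z\tilde z^{\,T}=\tfrac{|A_r|r^2}{2}I$ coming from the quarter-turn symmetry (the paper's Lemma). Those identities, your second-order expansion, and the observation that the first-order term collapses to $\tfrac{|A_r|r}{2}\operatorname{tr}M=-\tfrac{|A_r|r}{2}\det M$ are all correct, and your tangent-line argument for the convex case is cleaner and more rigorous than the paper's Taylor-expansion sketch.

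The gap is precisely in the step you defer as ``algebraic bookkeeping.'' Completing your expansion gives, up to $O(\|M\|^3)$,
\[
\sum_{z\in A_r}\bigl(\|p-\delta\|-r\bigr)=\frac{|A_r|\,r}{4}\bigl(\|M\|_F^2-2\det M\bigr)+\frac{|A_r|}{4r}\,\|Mp\|^2-\frac{1}{2r^3}\sum_{z\in A_r}\bigl(\tilde z^{\,T}M\tilde z\bigr)^2 .
\]
The quartic remainder is indeed dominated by the Frobenius piece (it is at most half of it, by $(\beta\cos\phi+\gamma\sin\phi)^2\le\beta^2+\gamma^2$ applied to $\tilde z^{\,T}M\tilde z$ in polar form), but the Frobenius piece itself is, writing $M=(m_{ij})$, equal to $\tfrac{|A_r|r}{4}\bigl[(m_{11}-m_{22})^2+(m_{12}+m_{21})^2\bigr]$, which \emph{vanishes} when $M$ is an infinitesimal rotation. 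On that direction only $\tfrac{|A_r|}{4r}\|Mp\|^2$ survives, and this is far too small: for $r=1/\sqrt2$ and $\Delta$ the rotation of $\Z^2$ by a small angle $\epsilon$, the left-hand side of the theorem is $\epsilon^2/\sqrt2$ while $r\,|A_r|\,d(\Delta,\Z^2)^2=8\epsilon^2/\sqrt2$ with the paper's distance $d(\Delta,\Z^2)=\sqrt{|v-v'|^2+|w-w'|^2}=\|M\|_F$. So the ``sharp inequality'' you claim at the end of your quadratic analysis cannot be extracted from your dominant form as stated: you must either restrict $M$ to a transversal of the $SO(2)$-action (which is what the paper's $(x,y)$-slice silently does) or reinterpret $d$ as a distance on the rotation coset space, and in either case the constant must be tracked rather than ``normalized'' after the fact. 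Naming and handling this degeneration is the missing content; everything else in your outline is routine.
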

\vspace{.5cm}	
The distance function $d\left(\Delta, \Z^2\right)$ is given by $\sqrt{ |v-v'|^2 + |w-w'|^2}$. Taking $d\left(\Delta, \Z^2\right)= \|\left(v, w\right)-\left(v',  w'\right)\|$, where $\| \cdot \|$ is any other norm on $\R^4$, would yield an equivalent result, up to constants. Our proof of this result relies on explicit computations of derivatives.

\subsubsection{Organization}
In this section, we gave a brief introduction to lattices and the framework of examining their distances from a fixed, non-lattice point and give our main result. In \ref{symmetries}, we prove a \ref{Lemma} about the rotational symmetry of lattice points in $\Z^2$. In \ref{section3}, we prove our result, and in \ref{sect4}, we give further directions for research which naturally arise from this result and those of \cite{extremal}.

\subsubsection{Notation}
We give a brief summary of the notation used throughout this work, some of which was given in the introduction. First, we remark that though we are always working with column vectors, for ease of notation we write them as row vectors. 
\begin{itemize}
    \item $SL \left(n, \R\right)$ is the group of $n \times n$ real matrices with determinant $1$; $SL \left(n,\Z\right)$ has integer entries.
    \item $L\left(\R^n\right)= SL\left(n, \R\right)/SL\left(n, \Z\right)$ is the space of unimodular lattices in $\R^n$. Here, we consider lattices up to rotation, so $L\left(\R^n\right)$ describes $SO\left(2, \R\right)\backslash SL\left(n, \R\right)/SL\left(n, \Z\right)$.
    \item $\Gamma$ will always refer to an arbitrary unimodular lattice. Because our lattices are integral, we can write $\Gamma = \Z a + \Z b$ in $L \left(\R^2\right)$ where $\left(a, b\right)$ is a basis for $\R^2$ with $\det\left(a, b\right)= 1$.

    \item We call the standard basis vectors $v =\left(1, 0\right)$ and $w =\left(0, 1\right)$, then we write $\Z^2 = \{ kv +  lw: k, l \in \Z\}$.
   
   \item $\Delta$ is a unimodular lattice given by a small perturbation of $\Gamma$'s basis vectors; $$\Delta = \{kv' + l w': k,l \in \Z\}$$ where $\|w -w'\|$ and $\|v-v'\|$ are small. We use $\Delta$ to denote the perturbation of both $\Gamma$ and $\Z^2$, but the context should make it clear what lattice is being perturbed. 
   
    \item $\Lambda$ is the unit covolume \emph{hexagonal} lattice. We use the basis $$\Lambda = \left\{k \frac{\sqrt{2}}{3^{1/4}} \,\left(1, \, 0\right)+ l \frac{1}{3^{1/4} \sqrt{2}} \,\left(1,\, \sqrt{3}\right):, k,l \in \Z\right\}.$$ 
    The \textit{density} of a lattice refers to the reciprocal of the covolume, meaning that $$\text{density of } \Gamma = \frac{1}{\text{vol} \left(\R^2/\Gamma\right)}$$.
    
    \item Given a lattice $\Gamma =\left(ma + n b\right)\in L \left(\R^2\right)$ and a point $q \in \R^2$, we define $$ A_r\left(\Gamma, q\right)= \{ ma+nb: m, n \in \Z, |ma+nb-q| = r\} $$ to be the set of lattice points exactly distance $r$ from $q$. We will denote it just as $A_r$ when the lattice is understood, and $A_r\left(\Gamma \right)$ to specify the lattice explicitly. 
    
    \item For $\Delta =\left(\Z a' + \Z b'\right)$ a fixed small perturbation of $\Gamma$, we define $C_r$ to be the set of perturbations of lattice points in $\Gamma$ which are distance $r$ from $p$; that is $$C_r = \{ ma'+nb': m, n \in \Z,  |ma+nb-p| = r\}.$$ The choice of fundamental domain and therefore $p$ is not reflected in this notation; the calculation is independent of this choice. 
\end{itemize}

\section{Symmetries} \label{symmetries} 
Following \cite{extremal}, we first show that points in lattice $\Z^2$ at a fixed distance $r$ from deep hole $p =\left(\frac{1}{2}, \frac{1}{2}\right)$ occur naturally in quadruples. In the following lemma, let $\Z^2 \in \R^2$ have basis 
$$ v = \left(1,0\right)\text{ and } w = \left(0,1\right).$$
The deep hole of the unit square is $p = \frac{v+w}{2} =\left(\frac{1}{2}, \frac{1}{2}\right)$ and consider $R = \begin{pmatrix} 0 & 1 \\ -1& 0 \end{pmatrix},$ the rotation matrix by $\frac{\pi}{2}$. This lemma says that for $p$ and \textit{any} lattice point $q$, there is a quadruple of lattice points given by a rotation of the vector connecting $p$ to $q$ by $\frac{\pi}{2}$; see Figure \ref{fig:vector:rotate} below.

\begin{lemma} \label{Lemma}   For any $q \in \R^2$ such that $p+q \in \Z^2$, the images $R^i q$ are in the set
$\{p+q, p + Rq, \, p + R^2q  \, p + R^3q\} \subset \Z^2$. \end{lemma}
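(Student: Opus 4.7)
The plan is to reduce the lemma to the observation that rotation by $\pi/2$ about the deep hole $p$ is a symmetry of $\mathbb{Z}^2$. Concretely, write this rotation as the affine map $T \colon x \mapsto R(x - p) + p = Rx + (p - Rp)$. A one-line computation gives $p - Rp = (1/2, 1/2) - (1/2, -1/2) = (0,1)$, which lies in $\mathbb{Z}^2$. Since $R$ permutes $\{\pm v, \pm w\}$ and hence preserves $\mathbb{Z}^2$, composing with an integer translation shows $T(\mathbb{Z}^2) = \mathbb{Z}^2$.

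Applying $T$ to the assumed lattice point $p + q$ returns $p + Rq \in \mathbb{Z}^2$; iterating $T$ and using $R^4 = I$ yields all four points $p + R^i q$ for $i = 0, 1, 2, 3$ as members of $\mathbb{Z}^2$, which is exactly the content of the lemma. As a concrete sanity check, parametrize $p + q = (a, b)$ with $a, b \in \mathbb{Z}$, so that $q = (a - \tfrac{1}{2}, b - \tfrac{1}{2})$; applying $R(x,y)^\top = (y, -x)^\top$ directly produces $p + Rq = (b, 1-a)$, $p + R^2 q = (1-a, 1-b)$, and $p + R^3 q = (1-b, a)$, each manifestly integral.

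There is no serious obstacle here: the lemma records the standard quarter-turn symmetry of $\mathbb{Z}^2$ about its half-integer points. The only thing requiring care is matching the sign convention of $R$ used in the paper, after which the identification $p - Rp \in \mathbb{Z}^2$ carries the entire argument.
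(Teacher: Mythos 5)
Your proof is correct, and it takes a somewhat different route from the paper's. The paper proves the lemma by brute-force coordinate computation: it writes $p+q = kv+lw$, hence $q = (k-\tfrac12)v + (l-\tfrac12)w$, and applies $R$ term by term to produce each of $p+Rq$, $p+R^2q$, $p+R^3q$ explicitly as integer combinations of $v$ and $w$. You instead isolate the structural fact that the affine quarter-turn $T\colon x \mapsto R(x-p)+p = Rx + (p-Rp)$ is an automorphism of the point set $\Z^2$, because its linear part permutes $\{\pm v,\pm w\}$ and its translation part $p - Rp$ is integral; the lemma then follows for all four points at once from $T^i(p+q) = p + R^iq$ and $R^4 = I$. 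This buys a cleaner argument (no need to expand $R^2q$ and $R^3q$ separately) and makes transparent \emph{why} the quadruple exists, namely that half-integer points are centers of a $4$-fold symmetry of $\Z^2$; the paper's version buys the explicit formulas $(1-l)v+kw$, etc., which it reuses later when writing down the perturbed quadruple $(\delta,\delta',\delta'',\delta''')$. Your closing remark about sign conventions is well taken: the paper states $R = \left(\begin{smallmatrix} 0 & 1 \\ -1 & 0\end{smallmatrix}\right)$ but then computes with $Rv = w$, $Rw = -v$, which is the inverse rotation; your explicit points $(b,1-a)$, $(1-a,1-b)$, $(1-b,a)$ accordingly appear in the opposite cyclic order from the paper's $(1-l,k)$, $(1-k,1-l)$, $(l,1-k)$, but the four-element set is the same since $R^{-1}=R^3$, so nothing in the lemma or its later use is affected.
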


\begin{proof} We explicitly calculate the quantities $p + Rq, \, p + R^2q, \, p + R^3q$, and argue that they are contained in $\Z^2$. If $p+q \in \Z^2$, then $ p+q = kv + lw \text{ for some } k,l\in \Z$ and therefore we can write $$q = \lp k-\frac{1}{2}\rp v + \lp l-\frac{1}{2}\rp w. $$ Using that
$Rv = w$ and $Rw = -v$, we have that
\begin{align*} 
 p + Rq &= p + R\left(\left(k-\frac{1}{2}\right)v + \left(l-\frac{1}{2}\right)w\right)\\
& = p + \left(k-\frac{1}{2}\right)Rv + \left(l-\frac{1}{2}\right)Rw\\
& = \frac{v+w}{2} + \left(k-\frac{1}{2}\right)w + \left(l-\frac{1}{2}\right)\left(-v  \right)\\
&= \frac{1}{2}v + \frac{1}{2}w + kw \\
& =\frac{1}{2}w -lw + \frac{1}{2}v\\
& = \left(1-l\right)v + kw \in \Z^2 \\
\end{align*}
Similarly, 
$ \, p + R^2 q = \left(1-k\right)v + \left(1-l\right)w \in \Z^2$, and 
$p + R^3 q = lv + \left(1-k\right)w \in \Z^2.$ 
\end{proof}

\begin{figure}\caption{Rotation of horizontal basis vector by $R$. }\label{fig:vector:rotate}
    \begin{tikzpicture}
        \draw[help lines, color=gray!30, dashed](-0.3, -0.3) grid (3.3, 3.3); 
    
    \draw[ ->] (1,1)--(2,1) node[below] at (2, 1){q};
    \draw[green!80, thick, ->] (1,1)--(1,2) node[left] at (1,2){Rq};
    \draw[thick, ->] (1.8,1.3) arc (0:89:.5);

        \node[below] at (1,1){p};

    \end{tikzpicture} 

\end{figure}
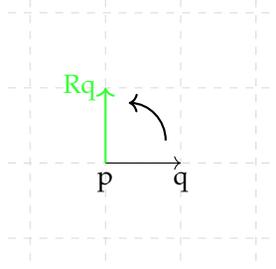

\section{Proof of Theorem} \label{section3}
To prove our main theorem using explicit computation of derivatives. To prove Theorem~\ref{theorem:theorem}, we need to show that for any lattice $\Delta$ sufficiently close $\Z^2$, 
	\begin{equation}\label{eq_dist2}
		\sum_{\delta \in C_r} \| p - \delta\| -  \sum_{z \in A_r} \| p - z\| \gtrsim r \, |A_r| \, d\left(\Delta, \Z^2\right)^2.
	\end{equation}
We note that "sufficiently close" is with respect to the Euclidean metric on the space of lattices defined in section 1. The cases of linear distance and squared distances are addressed separately and when considered together the result follows.

\subsection{Squared Distance} We now prove Theorem~\ref{theorem:theorem}. First, note that since the question we are considering is rotationally invariant. Any covolume one, unimodular lattice in $\R^2$ can be rotated to have one of its basis vectors be horizontal; we fix this as a convention. Since $\Gamma$ has covolume $1$, we can express a basis for it as
$$ v_1\left(x,y\right)= y^{\frac{-1}{2}}\left(1,0\right) \hspace{1cm}  \text{  and } \hspace{1cm} w_1\left(x,y\right)= y^{\frac{-1}{2}}\left(x,y\right).$$
To check their understanding, a reader could verify that $\Z^2$ corresponds to a parameter choice of $x=0, y=1$, and so $\Z^2$ is generated by  $v = \left(1,0\right), \, w = \left(0,1\right)$. It is standard for $\Z^2$ to consider the fundamental domain $[0, 1]^2$ with deep hole $p = \left(\frac{1}{2}, \frac{1}{2}\right)$. An arbitrary lattice point $z \in \Z^2$ is given by the expression $z = kv + lw$ for $k,l \in \Z$. It naturally has three distinct associated points by a rotation of $\frac{\pi}{2}$ around $p$. These associated points have the following expression:
\begin{align*}
&z' = p + Rq = p + \begin{bmatrix} 0 & -1 \\ 1 & 0\end{bmatrix}\left(z-p\right),\\
&z'' = p + \begin{bmatrix} 0 & -1 \\ -1 & 0\end{bmatrix}\left(z-p\right) \\
&z''' = p + \begin{bmatrix} 0 & 1 \\ -1 & 0\end{bmatrix}\left(z-p\right).\\
\end{align*}

Our naming convention follows \cite{extremal}. Now, take $z \ \in \Z^2$ and it's associated quadruple $\{z, z', z'', z'''\}$. We will investigate this quadruple under perturbation. Let $\Delta$ be a perturbation of $\Z^2$ and $\left(\delta, \delta', \delta'', \delta'''\right)$ be the perturbation in $\Delta$ of our quadruple $\left(z, z', z'', z'''\right)$ in $\Z^2$. The previous lemma implies that the tuples $\left(\delta, \delta', \delta'', \delta'''\right)$ of perturbed lattice points are of the form:
\begin{align*}
 &\delta = kv_1 + lw_1, \\
& \delta' = \left(1-l\right)v_1 + kw_1, \\
&\delta'' = \left(1-k\right)v_1 + \left(1-l\right)w_1  \\
& \delta''' = lv_1 + \left(1-k\right)w_1.\\
\end{align*}

\subsubsection{Defining $f$}
 We will show that, in total, the squared distance of a perturbed quadruple to our fixed $p$ strictly increases. That is, if $\Delta$ has parameters $x$ and $y$ as discussed above, we want to understand the behavior of the function $f(x,y)$ given by 
$$ f\left(x,y\right)= \| \delta - p\|^2 + \| \delta' - p\|^2 + \| \delta'' - p\|^2 + \| \delta''' - p\|^2.  $$

There are many ways to express and simplify $f$. We like the form
\begin{align*} 
 f\left(x,y\right)&= \left(\left(\frac{-1}{2} + \frac{k}{\sqrt{y}} + \frac{lx}{\sqrt{y}}\right)^2 + \left(\frac{1}{2} + \left(-1+k\right)\sqrt{y}\right)^2 + \left(\frac{-1}{2} + k\sqrt{y}\right)^2 \right)4y  + \left(-2 + 2k + 2\left(-1 + l\right)x + \sqrt{y}\right)^2 \\ & \hspace{.5cm}  + \left(\frac{1}{2} + \left(-1 + l\right)\sqrt{y}\right)^2 + \left(\frac{-1}{2} + l\sqrt{y}\right)^2  + \left(-2l + 2\left(-1 + k\right)x + \sqrt{y}\right)^2 + \left(-2 + 2l-2kx + \sqrt{y}\right)^2. \\
\end{align*}

\subsubsection{Partial derivatives of $f$}
We show that $\Z^2$ is a critical point in $L\left(\R^2\right)$ by directly computing partial derivatives of $f$. 
\begin{align*}
 \partial_x f &=
 \frac{\left(k-1\right)\left(-2 l+2 \left(k-1\right)x+\sqrt{y}\right)}{y} 
+\frac{\left(l-1\right)\left(-2+2 k+2\left(l-1\right)
x  +\sqrt{y}\right)}{y}\\ &-\frac{k \left(-2+2 l-2 k x+\sqrt{y}\right)}{y} +
\frac{2 l \left(-\frac{1}{2}+\frac{k}{\sqrt{y}}+\frac{l x}{\sqrt{y}}\right)}{\sqrt{y}}  \\
\end{align*}

\begin{align*}
 \partial_y f &= 
2 \left(-\frac{k}{2 y^{3/2}}-\frac{l x}{2 y^{3/2}}\right)\left(-\frac{1}{2}+\frac{k}{\sqrt{y}}+\frac{l x}{\sqrt{y}}\right)-\frac{\left(-2
l+2\left(-1+k\right)x+\sqrt{y}\right)^2}{4 y^2}  -\frac{\left(-2+2 l-2 k x+\sqrt{y}\right)^2}{4 y^2} \\
&-\frac{\left(-2+2 k+2\left(-1+l\right)x+\sqrt{y}\right)^2}{4 y^2} 
+\frac{-2
l+2\left(-1+k\right)x+\sqrt{y}}{4 y^{3/2}} + \frac{-2+2 l-2 k x+\sqrt{y}}{4 y^{3/2}}+\frac{-2+2 k+2\left(-1+l\right)x+\sqrt{y}}{4 y^{3/2}}
\\
&+ \frac{\left(-1+k\right)\left(\frac{1}{2}+\left(-1+k\right)
\sqrt{y}\right)}{\sqrt{y}} + \frac{k \left(-\frac{1}{2}+k \sqrt{y}\right)}{\sqrt{y}} + \frac{\left(-1+l\right)\left(\frac{1}{2}+\left(-1+l\right)\sqrt{y}\right)}{\sqrt{y}}+\frac{l
\left(-\frac{1}{2}+l \sqrt{y}\right)}{\sqrt{y}}. \\
\end{align*}

We evaluate both partials at $p$, or $x=0, y=1$, to show that they are identically $0$, independent of the values of $k$ and $l$. 
\begin{align*}
  \partial_x\left(0,1\right) &= 
  \left(-1 + k\right)\left(1 - 2 l\right)+\left(-1 + 2 k\right)\left(-1 + l\right)+ 2\left(-\left(\frac{1}{2}\right)+ k\right)l - k \left(-1 + 2 l\right)\\
    & = \left(-1 + k + 2l - 2kl\right)+ \left(1-2k -l +2kl\right)+ \left(k-l\right)\\
    & = 0 \\
 \end{align*}

 \begin{align*}
 \partial_y\left(0,1\right) &= 
  \left(-1+k\right)\left(-\frac{1}{2}+k\right)
+\frac{1}{4}\left(-1+2 k\right)-\frac{1}{4}\left(-1+2 k\right)^2+\frac{1}{4}\left(1-2 l\right)
 -\frac{1}{4}\left(1-2 l\right)^2 \\ 
 &+\left(-1+l\right)\left(-\frac{1}{2}+l\right)+
-\frac{l}{2}-l^2 +\frac{1}{4}\left(-1+2 l\right)-\frac{1}{4}\left(-1+2 l\right)^2\\
& = \left(\frac{1}{2}-\frac{3 k}{2}+k^2\right)+ \left(-\frac{1}{2}+\frac{3 k}{2}-k^2\right)+ \left(\frac{l}{2}-l^2\right)+ \left(\frac{1}{2}-\frac{3 l}{2}+l^2\right)+ \left(-\frac{1}{2}+l\right)\\
& = 0 \\
\end{align*}

\nI{We} have now shown $\Z^2$ is a critical point with respect to the squared distance metric $f$ on the space of lattices.

\subsubsection{The Hessian of $f$}

To understand the nature of this critical point, we compute the Hessian for $f$ at $x=0, y=1$. The Hessian has the generic form $$H\left(k,l\right)= \begin{bmatrix} \partial_{xx} & \partial_{xy} \\ \partial_{yx} & \partial_{yy}   \end{bmatrix}. $$
In our case, we have $$ H\left(k,l\right)= D^2f|_{x=0, y=1} = \begin{bmatrix} h_1\left(k,l\right)& -1 \\ -1 & h_3\left(k,l\right)\end{bmatrix}$$
where $h_1\left(k,l\right)= 4 \left(1-k+k^2-l+l^2\right)$ and $h_3\left(k,l\right)= 3-4 k+4 k^2-4 l+4 l^2$.

\nI{It} is important to note here that this is a key difference between $\Z^2$ and $\Delta$, the triangular lattice in \cite{extremal}. In $\Delta$'s Hessian, $h_3 = h_1$; here, $h_3 = h_1 - 1$. The characteristic polynomial of $H(k,l)$ is $$P\left(\lambda\right)= -1+\left(3-4 k+4 k^2-4 l+4 l^2-\lambda\right)\left(4\left(1-k+k^2-l+l^2\right)-\lambda\right)$$ with roots 
\begin{align*}
 \lambda &= \frac{1}{2} \left(7\pm \sqrt{5}-8 k+8 k^2-8 l+8 l^2\right)\\ 
& = \frac{1}{2}\left( 7 \pm \sqrt{5} + 2h_1\left(k,l\right)-8\right)\\
& = \frac{1}{2}\left(-1 \pm \sqrt{5} +  2h_1\left(k,l\right)\right).\\
\end{align*}

Let $\lambda_{min}\left(k,l\right)$ denote the behavior of smaller of the two eigenvalues, where here $h_1 + \frac{-1 - \sqrt{5}}{2}$; we want to minimize this with respect to $k$ and $l$. For all values of $\left(k,l\right)\in \Z^2$, $h_1 \geq 4$ with equality achieved at $\left(k,l\right)\in \{\left(0,0\right), \left(0,1\right), \left(1,0\right), \left(1,1\right)\}$; so, our minimum is $\lambda_{min} = 4 - \frac{-1-\sqrt{5}}{2}$. 
We note that the same conditions for $k,l$ hold for the larger root, so $\lambda_{max}$ is $h_1\left(k,l\right)+ \frac{\sqrt{5} -1}{2}$.
Then, our growth is bounded away from $0$. 
Note that for nonzero radii contained in the closure of the fundamental domain, $r = \frac{1}{\sqrt{2}}$. 
Since $\lambda_{min} = 4 - \frac{-1-\sqrt{5}}{2}$, we have that $\lambda_{min} > \frac{1}{2} = r^2 = k^2 + l^2$. \\

Lastly, we consider the asymptotic behavior of $\lambda_{min}$. We note that both $h_1$ and $h_3$ are positive definite quadratic forms, and that the off-diagonal terms are fixed at $-1$. Thus, $\lambda_{min} \geq \frac{1}{2}\left(9-\sqrt{2} + \sqrt{5}\right)$.
We now have an explicit lower bound for growth under perturbation: $\left(4 - \frac{-1-\sqrt{5}}{2}\right)- \sqrt{\frac{1}{2}}  =  \simeq 4.9109\dots$. \\
This implies the result in the case of squared distances: 
\begin{align*}
& \sum_{\delta \in C_r} \| p - \delta\|^2 -  \sum_{z \in A_r} \| p - z\|^2 \gtrsim r^2 \, |A_r| \, d\left(\Delta, \Z^2\right)^2. \\
\end{align*}
\qed

\subsection{Distance} The argument for linear distance follows the one preceding for squared distances, but here things look a little more complicated with the square root being taken over each summed term of $f\left(x,y\right)$. Beginning with $$ f\left(x,y\right)= \| \gamma - p\| + \| \gamma' - p\| + \| \gamma'' - p\| + \| \gamma''' - p\|,  $$ 
we substitute $\lambda$, $\lambda'$, $\lambda''$, $\lambda'''$ in $\Z^2$ for $\gamma$, $\gamma'$, $\gamma''$, $\gamma'''$, and the value of $p$, we re-express $f$ as the following: 

\begin{align*} 
f\left(x,y\right)&= \| \left(\frac{2k + 2lx - \sqrt{y}}{2\sqrt{y}}, \frac{2l\sqrt{y} -1}{2}\right)\| \\
&+ \|\left(\frac{2\left(1-l\right)+ 2kx - \sqrt{y}}{2 \sqrt{y}}, \frac{2k\sqrt{y} - 1}{2} \right)\|\\
& + \|\left(\frac{2\left(1-k\right)+ 2x\left(1-l\right)- \sqrt{y}}{2 \sqrt{y}}, \frac{2\left(1-l\right)\sqrt{y}}{2} \right)\| \\
& + \|\left(\frac{2l + 2\left(1-k\right)x - \sqrt{y}}{2 \sqrt{y}}, \frac{2\left(1-k\right)\sqrt{y} - 1}{2} \right)\|.  \\
\end{align*}

The next lines are the result of expanding the norm on each term. 
\begin{align*} 
f\left(x,y\right) &= 
 \left(\sqrt{\left(-\frac{1}{2}  +\frac{k}{\sqrt{y}}+\frac{l x}{\sqrt{y}}\right)^2+\left(-\frac{1}{2}+l \sqrt{y}\right)^2}\right)\\
 &+ \left(\sqrt{\left(-\frac{1}{2}+k \sqrt{y}\right)^2+\frac{\left(-2+2 l-2 k x+\sqrt{y}\right)^2}{4 y}}\right) \\
&+ \left(\sqrt{\left(\frac{1}{2}+\left(-1+l\right)\sqrt{y}\right)^2+\frac{\left(-2+2 k+2\left(-1+l\right)x+\sqrt{y}\right)^2}{4 y}}\right)\\
&+ \left(\sqrt{\left(\frac{1}{2}+\left(-1+k\right)\sqrt{y}\right)^2+\frac{\left(-2 l+2\left(-1+k\right)x+\sqrt{y}\right)^2}{4 y}}\right).  
\end{align*}

 \subsubsection{Partial Derivatives}
  
\noindent{The} partial $\partial_x f$ comes out to be 
\begin{align*}
 \partial_x f &= 
 \frac{\left(-1+k\right)\left(-2 l+2\left(-1+k\right)x+\sqrt{y}\right)}{2 \sqrt{\left(\frac{1}{2}+\left(-1+k\right)\sqrt{y}\right)^2  + \frac{\left(-2 l+2\left(-1+k\right)x+\sqrt{y}\right)^2}{4
y}} y}  - \frac{k \left(-2+2 l-2 k x+\sqrt{y}\right)}{2 \sqrt{\left(-\frac{1}{2}+k \sqrt{y}\right)^2  + \frac{\left(-2+2 l-2 k x+\sqrt{y}\right)^2}{4 y}}
y}\\ 
&+ \frac{\left(-1+l\right)\left(-2+2 k+2\left(-1+l\right)x+\sqrt{y}\right)}{2 \sqrt{\left(\frac{1}{2}+\left(-1+l\right)\sqrt{y}\right)^2+\frac{\left(-2+2 k+2\left(-1+l\right)x+\sqrt{y}\right)^2}{4
y}} y}  +\frac{l \left(-\frac{1}{2}+\frac{k}{\sqrt{y}}+\frac{l x}{\sqrt{y}}\right)}{\sqrt{\left(-\frac{1}{2}+\frac{k}{\sqrt{y}}+\frac{l x}{\sqrt{y}}\right)^2+\left(-\frac{1}{2}+l
\sqrt{y}\right)^2} \sqrt{y}}.
\end{align*}
Evaluated at $\left(0,1\right)$, we see that
\begin{align*}
    \partial_x f\left(0,1\right)&= 
     \frac{\left(-1+k\right)\left(-2 l+1\right)}{2 \sqrt{\left(\frac{1}{2}+\left(-1+k\right)\right)^2  + \frac{\left(-2 l+2\left(-1+k\right)0+1\right)^2}{4}}}  - \frac{k \left(-2+2l+\right)}{2 \sqrt{\left(-\frac{1}{2}+k \right)^2  + \frac{\left(-2+2 l+1\right)^2}{4}}}\\  
    & + \frac{\left(-1+l\right)\left(-2+2k+1\right)}{2 \sqrt{\left(\frac{1}{2}+\left(-1+l\right)\right)^2+\frac{\left(-2+2k+1\right)^2}{4}}}  +\frac{l\left(-\frac{1}{2}+k\right)}{\sqrt{\left(-\frac{1}{2}+k\right)^2+\left(-\frac{1}{2}+l\right)^2}} \\
    &  = 0. \\
\end{align*}
Next, we compute the partial of $f$ with respect to $y$, which gives us 
\begin{align*}   
 \partial_y f\left(0,1\right)& = 
     \frac{-\frac{\left(-2 l+2\left(-1+k\right)x+\sqrt{y}\right)^2}{4 y^2}+\frac{-2 l+2\left(-1+k\right)x+\sqrt{y}}{4 y^{3/2}}+\frac{\left(-1+k\right)\left(\frac{1}{2}+\left(-1+k\right)
    \sqrt{y}\right)}{\sqrt{y}}}{2 \sqrt{\left(\frac{1}{2}+\left(-1+k\right)\sqrt{y}\right)^2+\frac{\left(-2 l+2\left(-1+k\right)x+\sqrt{y}\right)^2}{4 y}}} \\
    &+ \frac{-\frac{\left(-2+2
    l-2 k x+\sqrt{y}\right)^2}{4 y^2}+\frac{-2+2 l-2 k x+\sqrt{y}}{4 y^{3/2}}+\frac{k \left(-\frac{1}{2}+k \sqrt{y}\right)}{\sqrt{y}}}{2 \sqrt{\left(-\frac{1}{2}+k
    \sqrt{y}\right)^2+\frac{\left(-2+2 l-2 k x+\sqrt{y}\right)^2}{4 y}}} + \frac{2 \left(-\frac{k}{2 y^{3/2}}-\frac{l x}{2 y^{3/2}}\right)\left(-\frac{1}{2}+\frac{k}{\sqrt{y}}+\frac{l
    x}{\sqrt{y}}\right)+\frac{l \left(-\frac{1}{2}+l \sqrt{y}\right)}{\sqrt{y}}}{2 \sqrt{\left(-\frac{1}{2}+\frac{k}{\sqrt{y}}+\frac{l x}{\sqrt{y}}\right)^2+\left(-\frac{1}{2}+l
    \sqrt{y}\right)^2}} \\
    & + \frac{-\frac{\left(-2+2 k+2\left(-1+l\right)x+\sqrt{y}\right)^2}{4 y^2}+\frac{-2+2 k+2
   \left(-1+l\right)x+\sqrt{y}}{4 y^{3/2}}+\frac{\left(-1+l\right)\left(\frac{1}{2}+\left(-1+l\right)\sqrt{y}\right)}{\sqrt{y}}}{2 \sqrt{\left(\frac{1}{2}+\left(-1+l\right)\sqrt{y}\right)^2+\frac{\left(-2+2
    k+2\left(-1+l\right)x+\sqrt{y}\right)^2}{4 y}}}. 
\end{align*}

Evaluating at $\left(0,1\right)$ we again get $0$:
\begin{align*}
 \partial_y f\left(0,1\right)&= 
 \frac{-\frac{\left(-2 l+\sqrt{y}\right)^2}{4}+\frac{-2 l+\sqrt{y}}{4}+\frac{\left(-1+k\right)\left(\frac{1}{2}+\left(-1+k\right)
\right)}{1}}{2 \sqrt{\left(\frac{1}{2}+\left(-1+k\right)\right)^2+\frac{\left(-2 l+1\right)^2}{4}}} 
+ \frac{-\frac{\left(-2+2
l+1\right)^2}{4}+\frac{-2+2 l+1}{4}+\frac{k \left(-\frac{1}{2}+k \right)}{1}}{2 \sqrt{\left(-\frac{1}{2}+k
\right)^2+\frac{\left(-2+2 l+1\right)^2}{4}}} \\ 
& + \frac{-\frac{\left(-2+2 k+1\right)^2}{4}+\frac{-2+2 k+1}{4 }+\frac{\left(-1+l\right)\left(\frac{1}{2}+\left(-1+l\right)\right)}{\sqrt{y}}}{2 \sqrt{\left(\frac{1}{2}+\left(-1+l\right)\right)^2+\frac{\left(-2+2
k+1\right)^2}{4 }}}  + \frac{2 \left(-\frac{k}{2}\right)\left(-\frac{1}{2}+k\right)+\frac{l \left(-\frac{1}{2}+l\right)}{1}}{2 \sqrt{\left(-\frac{1}{2}+\frac{k}{1}\right)^2+\left(-\frac{1}{2}+l\right)^2}} \\
 &   = 0. \hspace{4cm}
\end{align*}

\nI{Therefore} $\Z^2$ is a critical point for $f$. For our next step, we give the mixed partial $\partial g_{xy}|_{x=0, y=1}$, which evaluates to:  $$\frac{-1+k^2\left(10-24 l\right)+6 l-14 l^2+8 l^3+8 k^3\left(-1+2 l\right)-2 k \left(1-12 l^2+8 l^3\right)}{2 \sqrt{2} \left(1-2 k+2 k^2-2 l+2 l^2\right)^{3/2}}.$$

\subsubsection{The Hessian}
To establish $\Z^2$ as a local minima or maxima, we form the Hessian.  
$$ H\left(k,l\right)= D^2g|_{x=0, y=1} = \begin{bmatrix} h_1\left(k,l\right)& h_2\left(k,l\right)\\ h_2\left(k,l\right)& h_3\left(k,l\right)\end{bmatrix},$$
where 
\begin{align*}
        h_1\left(k,l\right)&= \frac{\sqrt{2}\left(1 - 3 k + 7 k^2 - 8 k^3 + 4 k^4 - 3 l + 7 l^2 - 8 l^3 + 
   4 l^4\right)}{\left(1 - 2 k + 2 k^2 - 2 l + 2 l^2\right)^{\frac{3}{2}}} \\
  h_2\left(k,l\right)&= \frac{-1 + k^2\left(10 - 24 l\right)+ 6 l - 14 l^2 + 8 l^3 + 8 k^3\left(-1 + 2 l\right)- 
 2 k\left(1 - 12 l^2 + 8 l^3\right)}{2 \sqrt{2}\left(1 - 2 k + 2 k^2 - 2 l + 
   2 l^2\right)^{\frac{3}{2}}} \\
   \text{ and \,\  } h_3\left(k,l\right)&= \frac{5 - 16 k^3 + 8 k^4 - 18 l + 26 l^2 - 16 l^3 + 8 l^4 - 
 6 k\left(3 - 8 l + 8 l^2\right)+ 
 k^2\left(26 - 48 l + 48 l^2\right)}{2 \sqrt{2}\left(1 - 2 k + 2 k^2 - 2 l + 
   2 l^2\right)^{\frac{3}{2}}}. \\
\end{align*}

\nI{The} determinant of $H\left(k,l\right)$ is 
\begin{align*}
\label{det}
& \frac{19 - 192 k^5 + 64 k^6 - 82 l + 194 l^2 - 304 l^3 + 320 l^4 - 
   192 l^5 + 64 l^6 + 64 k^4\left(5 - 3 l + 3 l^2\right)}{8\left(1 - 2 k + 
     2 k^2 - 2 l + 2 l^2\right)^2}\\
   &  - \frac{16 k^3\left(21 - 26 l + 24 l^2\right)+ 
   2 k^2\left(121 - 264 l + 336 l^2 - 192 l^3 + 96 l^4\right)- 
   2 k\left(49 - 144 l + 216 l^2 - 176 l^3 + 96 l^4\right))}{8\left(1 - 2 k + 
     2 k^2 - 2 l + 2 l^2\right)^2}. \\
     \end{align*}
     
For all values $k, \, l \in \Z$, $min_{\left(k,l\right)}det\left(H\left(k,l\right)\right) = \frac{19}{8}$ and this minimum is achieved by the three triples $\{\left(0,0\right), \left(0,1\right), \left(1,0\right)\}$. We note that $h_1>0$ for all values of $k,l$; the minimum is achieved at one of the triples minimizing determinant: $k=1, l=1$.
Since $det\left(H\left(k,l\right)\right)>0$ and $h_1>0$ for all pairs $\left(k,l\right)$, we conclude that our critical point $\Z^2 \in L\left(\R^2\right)$ is a local minimum! To establish a strictly positive lower bound on the growth of distances from lattice points to $p$ as their distance from $\Z^2$ increases, we give the following computation. \\

The characteristic polynomial $char\left(H\left(k,l\right)\right)$ is $\left(h_1 - z\right)*\left(h_3 - z\right)- h_2^2 = 0$, which expanded has a frankly hilarious form taking 10 printed lines, and so we leave them in short form:  
$$z= \frac{1}{2}\left(h_1 + h_3 \pm \sqrt{h_1^2 + 4 h_2^2 - 2h_1h_3 + h_3^2}\right).$$ 
With respect to $k$ and $l$, we claim these roots are always strictly positive. To see this, we first minimize over real values $\left( k,l\right) \in \R^2$ to identify candidates for $min_{\left(k,l\right) \in \Z^2} \left( \frac{1}{2}\left(h_1 + h_3 \pm \sqrt{h_1^2 + 4 h_2^2 - 2h_1h_3 + h_3^2}\right) \right)$. In the table \ref{table:roots} below, we give approximate values for roots, again emphasizing that these are the \textit{real} roots of $char\left(H\left(k,l\right)\right)$. 

\vspace{.5cm}

\begin{center}
\begin{tabular}{ |c|c| } 
 \hline
   Root & Decimal Approx.\\
 \hline
  $\frac{1}{2}\left(h_1 + h_3 + \sqrt{h_1^2 + 4 h_2^2 - 2h_1h_3 + h_3^2}\right)$  & $z=1.6231$ at $k=1.1530, l=0.8641$ \\
 \hline
 $\frac{1}{2}\left(h_1 + h_3 - \sqrt{h_1^2 + 4 h_2^2 - 2h_1h_3 + h_3^2}\right)$ & $z=0.618034$ at $k=0.584444, l=0.797255$ \\
\hline
\end{tabular}

\label{table:roots}
\end{center} 

\vspace{.5cm}

The smaller of the two eigenvalues is $z= \frac{1}{2} \left(h_1 + h_3 - \sqrt{h_1^2 + 4 h_2^2 - 2h_1h_3 + h_3^2}\right)$. We call the real-valued minimizing pair $\tilde{z} =\left(0.584444, 0.797255\right)$. To find the minimizing integer pair, we identified candidate tuples by testing all possible pairs with entries given by the floor and ceiling of $\tilde{z}$: $\left(0,0 \right), \left( 0,1\right), \left( 1,0\right), \left( 1,1\right)$. Both $\left(0,0\right)$ and $ \left(1,1\right)$ minimize $z =\left(\frac{9-\sqrt{5}}{\left(2 \sqrt{2}\right)}\right)$ over the integers. Then, for $\left( k,l\right) \in \Z^2$ we have that the smallest eigenvalue is $z = \frac{9-\sqrt{5}}{2\sqrt{2}}$, roughly $0.6180$. 
Thus, we have a positive bound for the smallest growth in total distance of lattice points from $p$ under small perturbation. This eigenvalue is undefined at $\left(0.5, 0.5\right)$, but these are not integers and so this does not affect our computation. 
\qed

\subsubsection{Convex Functions}

\begin{proof}
	It remains to study the case
	\begin{equation}
		\sum_{\delta \in C_r} \phi \left(\| p - \delta\|\right)-  \sum_{\lambda \in A_r} \phi \left(\| p - \lambda\| \right).
	\end{equation}
	where $\phi$ is a convex function. 
	For $\lambda \in A_r$ and its corresponding point under perturbation, $\delta \in C_r$, consider the quantity
	\begin{equation}
		\| p - \delta\| = \|p - \lambda \| + \varepsilon_{\delta}, \qquad \varepsilon_{\delta} \in \R.
	\end{equation}
Rearranging, we have $ \| p - \delta\| - \|p - \lambda \| = r + \varepsilon_{\delta}$. Summing, we see
	\begin{equation}
	    \sum_{\delta \in C_r}\left(\| p - \delta\|\right)-  \sum_{\lambda \in A_r}\left(\| p - \lambda\|\right)= \sum_{\delta \in C_r}\left(\varepsilon_{\delta}\right).  
	\end{equation}
Then 
	\begin{equation}
	    \sum_{\delta \in C_r}\left(\|p-\delta\|\right)= \sum_{\lambda \in A_r}\left(\|p-\lambda\|\right)+ \sum \varepsilon_{\delta} 
	    = \sum_{\delta \in C_r}\left(r + \varepsilon_{\delta}\right)
	\end{equation}
A Taylor expansion of $\sum_{\delta \in C_r} \phi\left(r+ \varepsilon_{\delta}\right)$ around $\varepsilon_{\delta}= 0$ shows that
	\begin{align*}
        \sum_{\delta \in C_r} \phi \left(\| p - \delta\|\right)-  \sum_{\lambda \in A_r} \phi \left(\| p - \lambda\|\right)& = \sum_{\delta \in C_r}\left(r + \varepsilon_{\delta}\right)\\
		 & = \phi\left(r + \epsilon_{\delta}\right)+ \phi'\left(r\right)\sum_{\delta \in C_r} {\varepsilon_{\delta}} + \frac{\phi''\left(R \right)}{2} \sum_{\delta \in C_r}{\varepsilon_{\delta}^2} + o\left(d\left(\Lambda, \Delta \right)^2\right),
	\end{align*}
	where the error term is allowed to depend on $r$ and $A_r$. Our first result gives us a bound for $r \, |A_r| \, d\left(\Lambda, \Gamma \right)^2$, so we have
	\begin{equation}
		\sum_{\delta \in C_r} {\varepsilon_{\delta}}  \gtrsim r \, |A_r| \, d\left(\Lambda, \Delta \right)^2.
	\end{equation}
	Thus, we have shown that for a convex function $\phi$, 	\begin{equation}
		\sum_{\delta \in C_r} \phi \left(\| p - \delta\|\right)-  \sum_{\lambda \in A_r} \phi \left(\| p - \lambda\|\right)\gtrsim  r \, \phi'\left(r\right)\, |A_r| \, d\left(\Lambda, \Delta \right)^2.
	\end{equation}
\end{proof}

\newpage
   
\section{Further research}
\label{sect4}

\subsection{Higher dimensions}
This result gives promising indications of generalization. In particular, we conjecture that the $3$-dimensional analog of the result for some lattices which are optimal for sphere packing will hold. Our main obstacle in this endeavor is the growth of the dimension of the space of lattices. We construct $L\left(\R^n\right)$ as $SL\left(n, \R\right)/ SL\left(n, \Z\right)$, a quotient space with dimension $n^2-1$. The space $L\left(\R^2\right)$ considered above has dimension $3$, but if we consider lattices only up to rotation, then the quotient space is $2$ dimensional. The dimension of $L\left(\R^3\right)$ is $8$; even if we quotient by rotations again, the resulting space is $5$-dimensional.

\subsection{Connections to Sphere Packings}
Given a lattice $\Gamma$, we can associate a \textit{sphere packing} 
$\mathcal{B}$ by putting spheres of the same radius around each lattice point so that the resulting spheres are mutually tangent. Informally, optimal sphere packings in Euclidean spaces are arrangements of (disjoint)
spheres of the same size which cover as much of the space as possible. More precisely, let $B_r\left(x \right)$ denote a Euclidean ball of radius $r$ around the point $x$. 
We define $B_r\left(x, \Lambda \right): = B_r\left(x\right) \cap \mathcal{B}$. Then the ratio of the volumes $$ \frac{B_r\left(x, \Lambda \right)}{B_r\left(x \right)}$$ is called the \textit{density} of the packing. An \textit{optimal} packing maximizes $$r_{\mathcal{B}} = \lim_{r \rightarrow \infty} \frac{B_r\left(x, \Lambda \right)}{B_r\left(x \right)}.$$
The sphere packings associated to the square and hexagonal lattices in $\R^2$ are  critical points for this notion of density, and as we showed above, the lattices critical points in the space of lattices for our problem of studying distances to deep holes. It is natural to ask whether those lattices in $L \left(\R^n \right)$ which are associated to optimal sphere packings are also extremal in our sense. Generally, we conjecture that any unimodular lattice which is also an optimal sphere packing in $\R^n$ will exhibit this extremal property.

\subsection{Other point sets} There are other naturally occurring families of point sets in Euclidean spaces, arising from various geometric and dynamical constructions. Examples include sets of \emph{holonomy vectors of saddle connections on translation surfaces} \cite{holonomy}, and \emph{cut-and-project quasicrystals} \cite{quasi}. In both examples there are versions of the question we have considered above about \emph{deep holes} in these point sets; however, as we saw with the growth on dimension of $L \left(\R^n \right)$, understanding optimal configurations is a challenging question due to the higher-dimensional nature of the associated spaces of configurations. An additional consideration would be the lack of an obvious additive structure. Intuition may be gained from first examining examples like the sets of saddle connections associated to Veech surfaces \cite{holonomy} and well-known tilings \cite{tilings}, like the Penrose tiling\cite{penrose}.

\newpage

\printbibliography  

\end{document}